\numberwithin{equation}{section}
\newtheorem{theorem}{Theorem}[section]
\newtheorem{lemma}[theorem]{Lemma}
\newtheorem{prop}[theorem]{Proposition}
\theoremstyle{definition}
\newtheorem{defi}{Definition}[section]
\newtheorem{rem}[defi]{Remark}
\newcommand\dd{\partial}
\newcommand\half{\frac{1}{2}}
\newcommand\ov{\overline}
\newcommand\rhat{\widehat\rho}
\newcommand\w{\wedge}
\newcommand\g{\mathfrak g}
\newcommand\m{\mathfrak m}
\newcommand\h{\mathfrak h}
\newcommand\ha{\widehat{\mathfrak h}}
\newcommand\pp{\mathfrak p}
\newcommand\bb{\mathfrak b}\newcommand\D{\Delta}
\renewcommand\l{\lambda}\newcommand\Dp{\Delta^+}
\newcommand\Da{\widehat\Delta}
\newcommand\Pia{\widehat\Pi}
\newcommand\Dap{\widehat\Delta^+}
\newcommand\Wa{\widehat{W}}
\renewcommand\d{\delta}
\renewcommand\t{\otimes}
\renewcommand\a{\alpha}
\renewcommand\aa{\mathfrak a}
\renewcommand\b{\bb}
\renewcommand\k{\mathfrak k}
\newcommand\q{\mathfrak q}
\renewcommand\i{{\mathfrak i}}
\newcommand\ganz{\mathbb Z}
\newcommand\s{\sigma}
\renewcommand\L{\Lambda}
\renewcommand\aa{\mathfrak a}
\renewcommand\u{\mathfrak u}
\newcommand\C{\mathbb C}
\newcommand\R{\mathbb R}
\newcommand\si{\sigma}
\renewcommand\ha{\widehat{\mathfrak h}}
\newcommand{\germ}[1]{\mathfrak{#1}}
\newcommand\p{\Phi}
\newcommand{\wL}{{\widehat L}}
\begin{document}

\title{Casimir operators, abelian subspaces and $\u$-cohomology} 
\author{ Pierluigi M\"oseneder Frajria\\ Paolo Papi}
\keywords{Lie algebra, Casimir operator, abelian subspace, $\u$-cohomology}
\subjclass{17B20, 17B56}
\maketitle
\section{Introduction} 
This note is an exposition of old and recent results of B. Kostant 
regarding the relationships between
  the exterior algebra of a simple Lie algebra $\g$ and the
action of the Casimir operator on it (see \cite{K1}, \cite{K2}, \cite{Kimrn}, \cite{Kinv}). A key role in this connection 
is played by the abelian subalgebras
  of $\g$ and in particular by the abelian ideals of a Borel
subalgebra $\mathfrak b$ of $\g$. These objects have been intensively and 
thoroughly investigated after
nice results of D. Peterson and subsequent work of several authors 
which link these ideals  to
discrete series, the theory of affine Weyl groups,  
combinatorics and number theory. \par
The previous setting can be extended to a $\ganz_2$-graded Lie algebra
$\g=\g^{\ov 0}\oplus \g^{\ov{1/2}}$, where the role of abelian subalgebras is played by 
the abelian subspaces of $\g^{\ov{1/2}}$ (here $\g^{\ov 0},\,\g^{\ov{1/2}}$ denote the sets of fixed and
antifixed points of the involution $\s$ on $\g$ inducing the $\ganz_2$-grading). 
In the following we will refer to this more general setting as the {\it graded case}. The framework we have described at the beginning  will be called the {\it adjoint case} (it is indeed a particular instance of the graded case: see Section 4). \par
The generalization of the results
of Kostant to the
graded case can be found in
\cite{P},
\cite{cmp},
\cite{Han}.\par Recently (cf. \cite{Kinv}) Kostant pointed out a connection between his old results on
abelian subalgebras and the generalization to the affine case due to Garland-Lepowsky \cite{Ga} of his
classical  results on $\u$-cohomology \cite{Kveryold}. In this paper, we exploit this connection to obtain a
unified approach to Kostant's results and their graded generalizations. 
 One of the advantages of our approach consists in 
avoiding  any reference to the theory
of Clifford algebras. Another useful device that we introduce in our 
treatment is  the natural  isomorphism
$$\w^p\g^{\ov{1/2}} {\buildrel \cong \over \longrightarrow}\ \w^{(p,p/2)}\u^-$$
where $\u^-=t^{-1}\g^{\ov 0}[t^{-1}]\oplus t^{-\half}\g^{\ov{1/2}}[t^{-	\half}]$  (see \eqref{grado} for undefined notation). This  explains the role played  by affine Lie algebras and their cohomology. 
\par
Although the main results are individually known (we have tried to make precise attributions
in Section 4), the new feature of our approach
consists in exploting formula  \eqref{lap}, which relates 
the action on $\w\u^-$ of the Casimir operator of $\k$, the Laplacian 
and the scaling element  of the affine Lie algebra $\widehat L(\g,\s)$.
This formula is new in the graded case and it is known as Garland's formula in the adjoint case. The connection between 
Garland's formula and abelian ideals theory has been noticed 
by Kostant in \cite{Kinv}. \par
 Formula \eqref{lap} is the cornerstone of the present work, for it allows us to give a clean, compact  and unified treatment of the various contributions to the subject.
The exposition is basically self-contained, with two exceptions: a ``Laplacian calculation" which can be found in 
\cite{Kumar} and a technical lemma which is taken from \cite{kac-2007}.
The main results are Theorems \ref{eigen}, \ref{w}, \ref{finito}. 
\section{Setup}
Let $\g$ be a complex semisimple Lie algebra and let $(\cdot,\cdot)$  be its Killing form. Let $\s$ be an involutive
automorphism of $\g$. If $j\in\R$ set $\bar j=j+\ganz\in\R/\ganz$. Let $\g^{\bar j}$  be the $e^{2\pi i j}$-eigenspace of
$\s$, so that we can write $\g=\k\oplus\pp$ where $\k=\g^{\bar0}$ and $\pp=\g^{\ov{1/2}}$.  Let $n$ be the rank of $\k$ and
$N$ its dimension. Fix a Borel subalgebra $\bb_0$ of $\k$, with Cartan component $\h_0$, and let $\Dp_0$ be the set of
positive roots of the root system $\D_0$ of $\k$ corresponding to the previous choice.

Let $\wL(\g,\s)$ be the twisted affine Kac-Moody Lie algebra corresponding to $\g$ and $\s$
(cf. \cite{Kac}). More precisely introduce a 
Cartan subalgebra $\ha$ by setting
$$\h'=\h_0\oplus\C c,\qquad
\ha=\h_0\oplus\C c\oplus\C d$$
and define
\begin{align*}
&L(\g,\s)=\sum_{j\in\ganz}(t^j\otimes\k)\oplus\sum_{j\in\half +\ganz}(t^j\otimes\pp),\\
&L'(\g,\s)=L(\g,\s)+\h',\\
&\wL(\g,\s)=L(\g,\s)+\ha.\end{align*}

If $x\in\g^{\bar r}$, we set $x_r=t^r\otimes x$ for any $r\in\bar r$. With this notation 
the bracket of $\wL(\g,\s)$ is defined by 
$$
[x_r+ac+bd,x'_s+a'c+b'd]=[x,x']_{s+r}+sb x'_s+rb'x_r+\d_{r,-s}r(x,x')c
$$
for $a,a',b,b'\in\C$. Let $\Da$ denote
the set of roots of $\wL(\g,\s)$ and $$\Dap=\Dp_0\cup\{\a\in\Da\mid\a(d)>0\}.$$ 
Then $\Dap$ is a set of positive roots for
$\Da$. We let
$
\Pia
$
be the corresponding set of simple roots.   It follows from \cite[Exercise 8.3]{Kac} 
that $\Pia$ is a finite linearly independent subset of $\ha^*$ with exactly $n+1$ elements.  
We set $\Pia=\{\alpha_0,\a_1,\dots,\a_n\}$.
\vskip5pt
If $\l\in \ha^*$, we denote by  $\bar \l$  the restriction of $\l$ to $\h_0$. Define $\delta\in\ha^*$
 by setting $\delta(\h_0)=\d(c)=0$ and $\d(d)=1$. 
 It is easy to check that
 $(\cdot,\cdot)$ is nondegenerate when restricted to $\h_0$. Thus for $\mu\in\h_0^*$ we can define
$h_{\mu}$ to be the unique element of $\h_0$ such that
$\mu(h)=(h_{\mu},h)$ for all $h\in\h_0$. Then one can define a bilinear form on $\h_0^*$ -- still denoted by $(\cdot,\cdot)$
--  by setting $(\mu,\eta)=(h_\mu,h_\eta)$. 
\vskip5pt
 Write $\a_i=s_i\d+\overline\a_i$. By  \cite[Exercise 8.3]{Kac} we have that $\bar\a_i\ne0$. Set
$h_i=\frac{2}{(\overline\a_i,\overline\a_i)}h_{\overline\a_i}$ and fix
$e_i=t^{s_i}\t X_i$, $f_i=t^{-s_i}\t Y_i$ in the root spaces 
$\widehat L(\g,\si)_{\a_i},\widehat L(\g,\si)_{-\a_i}$ respectively, in such a way that
$(X_i,Y_j)=\d_{i,j}\frac{2}{(\overline\a_i,\overline\a_i)}$. Then
$[X_i,Y_j]=\d_{i,j}h_i$. Set
$\a_i^\vee=\frac{2s_i}{(\overline\a_i,\overline\a_i)}c+h_i$ and
$\Pia^\vee=\{\a_0^\vee,\ldots,\a_n^\vee\}$. 
 It follows that $ [e_i,f_j]=\d_{i,j}\a^\vee_i. $
\vskip10pt
Denote by  $\h_\R$   the real span of $\a^\vee_0,\dots,\a_n^\vee$ and let
 $\wL(\g,\s)_\R$  be the real algebra
generated by $\h_\R\oplus\R d$ together with  the Chevalley generators $e_i,f_i,\,1\leq i\leq n$.
 Let $conj$  be the conjugation of $\wL(\g,\s)$ corresponding
to the real form $\wL(\g,\s)_\R$ and define the conjugate linear antiautomorphism
$\sigma_o$  of $\wL(\g,\s)$
 by setting $\sigma_o(h)=conj(h)$, $\sigma_o(e_i)=f_i$, and
$\sigma_o(f_i)=e_i$. We extend the form $(\cdot,\cdot)$ to $\widehat L(\g,\s)$ by setting 
$$
(x_r,y_s)=\d_{r,-s}(x,y),\,(L(\g,\s),d)=(L'(\g,\s),c)=(d,d)=0,\,(c,d)=1.
$$
It is easy to check that $(\cdot,\cdot)$ is a nondegenerate invariant form on $\wL(\g,\s)$. In particular, it is nondegenerate when restricted to $\ha$.
We let $\nu:\ha\to\ha^*$ be the isomorphism induced by $(\cdot,\cdot)$, i.e.\ $\nu(h)(k)=(h,k)$.
\par

Since $(\cdot,\cdot)$ is real on $\h_\R$, we have that $(\g_\R,\g_\R)\subset\R$.
Following \cite[Definition~2.3.9]{Kumar}, we can therefore define the  Hermitian form $\{\cdot ,\cdot \}$ on
$\wL(\g,\s)$ by setting
\begin{equation}\label{sigma}
\{x,y\}=(x,\sigma_o(y)).
\end{equation}
This form is contravariant, i.e. $\{[a,x],y\}=-\{x,[\s_o(a),y]\}$.
\vskip5pt
We set
\begin{align*}
&\m= \k+\ha,\\
&\u= \sum_{\a(d)>0}\wL(\g,\s)_\a,\\
&\q=\mathfrak m \oplus \u.\\
\end{align*} 
 We also set $ \u^-=\sum\limits_{\a(d)<0}\wL(\g,\s)_\a,\,
\q^-=\m\oplus \u^-$; note that 
 $\s_o(\u)=\u^-$. Since $(\u,\q)=0$ and the form
$(\cdot,\cdot)$ is  nondegenerate on $\wL(\g,\s)$, it follows that $\{\cdot,\cdot\}$ defines a
nondegenerate hermitian form on $\u^-$. By Theorem 2.3.13 of \cite{Kumar}, this form is positive
definite.
 Extend
$\{\cdot ,\cdot \}$ to
$\w\u^-$ in the usual way:  elements in  $\w^r\u^-$
  are  orthogonal to elements of $\w^s\u^-$ if $r\ne s$ whereas

$$
(X_1\wedge\dots\wedge X_r,Y_1\wedge\dots\wedge
Y_r)=\det\left(\,\{X_i,Y_j\}\,\right).
$$
Similarly, we can extend $(\cdot,\cdot)$ to define a symmetric bilinear form on $\w\widehat
L(\g,\sigma)$. If we extend $\s_o$ to $\w^k\widehat L(\g,\sigma)$ by setting $\s_o(x^1\w\cdots\w
x^k)=\s_o(x^1)\w\cdots\w \s_o(x^k)$, then obviously
\eqref{sigma} still holds  with $x,y\in\w\u^-$.

\vskip10pt
Set $\dd_p:\w^p\u^-\to\w^{p-1}\u^-$ to be the usual Chevalley-Eilenberg boundary operator
defined by 
$$
\dd_p(X_1\w\ldots\w X_p)=\sum_{i<j}(-1)^{i+j}[X_i,X_j]
\wedge X_1\dots\widehat{X}_i\dots\widehat{X}_j\dots\wedge
X_{p}
$$
if $p>1$ and $\dd_1=\dd_0=0$ and let $H_p(\u^-,\C)$ be its homology.
Let $L_p:\w^p\u^-\to\w^{p}\u^-$ be the corresponding Laplacian:
$$
L_p=\dd_{p+1}\dd^*_{p+1}+\dd^*_{p}\dd_p.
$$
 where $\dd_p^*$ denotes the adjoint of $\dd_p$ with respect to $\{\cdot ,\cdot \}$.
 \par
We shall use the following two basic properties of $L_p$ (see e.g. \cite[\S~2]{Kveryold})
\begin{equation}\label{iso} Ker\,L_p\cong H_p(\u^-).
\end{equation}
\begin{equation}\label{im} (Ker\,L_p)^\perp=Im\,\partial^*_p+Im\,\partial_{p+1}.
\end{equation}

Since $\u^-$ is stable under $ad(\m)$ we have an action of $\m$ on $\u^-$. Restricting this action to $\k$ we get an action of $\k$ on $\u^-$. Notice also
that, since $c$ is a central element, the action of $c$ on $\u^-$ is trivial. Recall that the Casimir 
operator $\Omega_{\k}$ of $\k$ is the element of the universal enveloping algebra of $\k$ defined by
setting 
$$
\Omega_{\k}=\sum_{i=1}^Nb_ib'_i,
$$
 where
$\{b_1,\ldots,b_N\},\,\{b'_1,\ldots,b'_N\}$ are dual bases of $\k$ with respect to
$(\cdot,\cdot)$. Set 
$\{u_1,\ldots,u_{n}\}$ and $\{u^1,\ldots,u^{n}\}$ to be bases of
$\h$ dual to each other with respect to $(\cdot,\cdot)$. It is well known that  $\Omega_{\k}$ can be rewritten as 
$$
\Omega_{\k}=\sum_{i=1}^{n}u_iu^{i}+2\nu^{-1}(\rho_0)+\sum_{\a\in\Dp_0}x_{-\a}x_\a.
$$
where $\rho_0=\frac{1}{2}\sum\limits_{\a\in\Dp_0}\a$ and $x_\a$ is a root vector in $\k$.\par
Define $\L_0\in\ha^*$ by setting $\L_0(\h_0)=\L_0(d)=0$ and $\L_0(c)=1$. Set  
\begin{equation}\label{rho}
\rho=\half\L_0+\rho_0
\end{equation}
Notice that, since $\s^2=Id$ and $(\cdot,\cdot)$ is the Killing form, then $\rho$ coincides with the element
$\rhat_\s$ defined in \cite[(4.27)]{kac-2007}. In particular, by \cite[Lemma~5.3]{kac-2007}), we have that  
$\rho(\a_i^\vee)=1$ for $i=0,\dots,n$ and $\rho(d)=0$.
\par
\section{The results}
First we make explicit the relationship between the Casimir element and the Laplacian.
\begin{prop}\label{laplacian} For $x\in\w\u^-$ we have 
\begin{equation}\label{lap}
L_p(x)=-\half(d+\Omega_{\k})(x).
\end{equation}
\end{prop}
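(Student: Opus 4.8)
The plan is to obtain \eqref{lap} by specializing to the parabolic $\q=\m\oplus\u$ of $\widehat L(\g,\s)$ the general computation of the Laplacian on $\w\u^-$ for affine Kac--Moody algebras (Garland's formula), and then unwinding the resulting Casimir eigenvalue. First I would set up the reductions. Since the contravariant Hermitian form $\{\cdot,\cdot\}$ is positive definite on $\u^-$ (Theorem 2.3.13 of \cite{Kumar}), $\dd_p^*$ is well defined and \eqref{iso}--\eqref{im} hold. Because $\s_o$ is a conjugate-linear antiautomorphism with $\s_o(\u)=\u^-$ and $\s_o(\m)=\m$, contravariance gives $\ad(a)^{*}=-\ad(\s_o(a))$ on $\w\u^-$ for $a\in\m$; hence $\dd_p^*$, and with it $L_p$, is $\m$-equivariant, and $\dd_p,\dd_p^*,L_p$ all preserve the $\ZZ_2$-grading and the $\tfrac12\ZZ$-grading by $d$. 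As $-\tfrac12(d+\Omega_{\k})$ is likewise an $\m$-endomorphism of $\w^p\u^-$ ($d$ acts by a scalar on each irreducible $\m$-module and $\Omega_{\k}$ lies in the centre of $U(\k)$), it is enough to verify \eqref{lap} on every irreducible $\m$-submodule $W\subseteq\w^p\u^-$.

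Now I would invoke the ``Laplacian calculation'' of \cite{Kumar} --- one of the two results we take on trust --- which computes $\dd_{p+1}\dd^*_{p+1}+\dd^*_p\dd_p$ by means of the Casimir operator of $\widehat L(\g,\s)$ and yields, on such a $W$, the scalar $\tfrac12\bigl((\rho,\rho)-(\L+\rho,\L+\rho)\bigr)$, where $\L\in\ha^*$ is the $\Dap$-highest weight of $W$. It remains to evaluate this scalar. As an $\m$-module, $W$ is the irreducible $\k$-module with $\Dp_0$-dominant highest weight $\mu\in\h_0^*$, with $d$ acting by a scalar $-m$ and $c$ by $0$; thus $\L=\mu-m\d$. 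From $\nu^{-1}(\L_0)=d$ and $\nu^{-1}(\d)=c$ one obtains $(\L_0,\L_0)=(\d,\d)=0$, $(\L_0,\d)=1$ and $\h_0^*\perp(\C\L_0\oplus\C\d)$; combined with $\rho=\tfrac12\L_0+\rho_0$ these give $(\L+\rho,\L+\rho)=(\mu+\rho_0,\mu+\rho_0)-m$ and $(\rho,\rho)=(\rho_0,\rho_0)$. Hence $L_p$ acts on $W$ by $\tfrac12\bigl(m-(\mu,\mu+2\rho_0)\bigr)$, which is exactly the value of $-\tfrac12(d+\Omega_{\k})$ on $W$, since $d$ acts by $-m$ and $\Omega_{\k}$ by $(\mu,\mu+2\rho_0)$. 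This establishes \eqref{lap}.

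The step I expect to be the main obstacle is the appeal to \cite{Kumar}: one must check that the Laplacian calculation there, stated for an untwisted symmetrizable Kac--Moody algebra with its standard triangular decomposition, carries over to the twisted algebra $\widehat L(\g,\s)$ with $\m=\k+\ha$ in the role of the Levi subalgebra and with a $\tfrac12\ZZ$-grading by $d$ --- precisely the point at which \eqref{lap} is new in the graded case. The Casimir must be taken with the normalization attached to the $\rho$ of \eqref{rho}, for which the relevant input is the identification $\rho=\rhat_\s$, hence $\rho(\a_i^\vee)=1$ and $\rho(d)=0$, imported from \cite{kac-2007}. The sign in \eqref{lap} and the overall factor $\tfrac12$ are the delicate bookkeeping; convenient checks are the case $p=0$, where $L_0=0$ matches $-\tfrac12(d+\Omega_{\k})$ on $\w^0\u^-=\C$, and the positive semidefiniteness of $L_p$, which here amounts to the inequality $(\mu,\mu+2\rho_0)\le m$ for every irreducible $\m$-constituent of $\w\u^-$.
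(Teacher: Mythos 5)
Your proof is correct and follows essentially the same route as the paper: both rest on the Laplacian calculation from \cite{Kumar} applied to $\w\u^-\simeq\C\otimes\w\u^-$ together with the decomposition $\rho=\half\L_0+\rho_0$ and the triviality of the $c$-action. The only difference is cosmetic: the paper states Kumar's result as the operator identity $L_p=-\half\Omega$ and computes $\Omega=\Omega_\k+d+2dc$ directly from the dual-basis expression, whereas you pass through the eigenvalue $\half\bigl((\rho,\rho)-(\L+\rho,\L+\rho)\bigr)$ on each irreducible $\m$-constituent and match scalars.
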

\begin{proof}
Note that 
$\{u_1,\ldots,u_{n},c,d\}$ and $\{u^1,\ldots,u^{n},d,c\}$ are bases of
$\ha$ dual to each other with respect to $(\cdot,\cdot)$.  Then, following \cite{Kumar}, we set
$$\Omega=\sum_{i=1}^{n}u_iu^{i}+2cd+2\nu^{-1}(\rho)+\sum_{\a\in\Dp_0}x_{-\a}x_\a.$$
By \eqref{rho}, we have that 
$$
\Omega=\Omega_{\k}+d+2dc.
$$
   
The Laplacian calculation done at p.105 of \cite{Kumar} applied to $\w \u^-\simeq\C\otimes \w\u^-$
gives that, if
$
x\in\w\u^-$, then
$
L_p(x)=-\half \Omega(x).
$
Hence, by observing that $c$ acts trivially on $\w\u^-$, the result follows.
\end{proof}

We need to recall a key tool in what follows, namely Garland-Lepow\-sky generalization of 
Kostant's
theorem on the cohomology of the nilpotent radical.  We need some more notation.
 If $\lambda\in \ha^*$ is such that $\ov \l$ is dominant integral for $\Dp_0$, denote by 
$V(\lambda)$ be the irreducible $\m$-module of highest weight $\lambda$. Denote by $\Wa$ the Weyl group of $\widehat L(\g,\si)$. If $w\in\Wa$ set 
$$N(w)=\{\beta\in \Dap\mid w^{-1}(\beta)\in -\Dap\}.$$
Set 
$$\Wa'=\{w\in\Wa\mid w^{-1}(\Dp_0)\subset \Dap\}.$$ 
The following is a special case of Theorem 3.2.7 from \cite{Kumar}, which is an
extended version of  Garland-Lepowsky result \cite{Ga}.
\begin{theorem}\label{GL}
$$H_p\left(\u^-\right)=\bigoplus_{\substack{w\in \Wa'\\ \ell(w)=p}} V\left(w(\rho)-
\rho\right)
.$$
Moreover a representative of the highest weight vector of $V\left(w(\rho)-\rho\right)$
is given by 
\begin{equation}\label{hw}X_{-\beta_1}\wedge\dots\wedge X_{-\beta_p}\end{equation} 
where $N(w)=\{\beta_1,\ldots,\beta_p\}$  and
the
$X_{-\beta_i}$ are root vectors in $\wL(\g,\s)$. 
\end{theorem}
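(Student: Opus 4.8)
The plan is to follow the classical Kostant–Garland–Lepowsky strategy, now streamlined by Proposition \ref{laplacian}. By \eqref{iso}, $H_p(\u^-)\cong Ker\,L_p$, so by \eqref{lap} it suffices to diagonalize the operator $d+\Omega_\k$ on $\w^p\u^-$ and identify its kernel, together with the $\m$-module structure of that kernel. Since $d$ and $\Omega_\k$ commute with the $\k$-action (and $c$ acts trivially), $Ker\,L_p$ is an $\m$-submodule of $\w^p\u^-$, and it is automatically finite-dimensional because each weight space of $\w^p\u^-$ with $d$-eigenvalue bounded below is finite-dimensional. So the task reduces to: (i) show $Ker\,L_p$ is a direct sum of modules $V(w(\rho)-\rho)$ with $w\in\Wa'$, $\ell(w)=p$; and (ii) exhibit \eqref{hw} as a highest-weight vector.

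For step (ii), the computation is direct. Given $w\in\Wa'$ with $\ell(w)=p$, write $N(w)=\{\beta_1,\dots,\beta_p\}\subset\Dap$; the condition $w^{-1}(\Dp_0)\subset\Dap$ forces $N(w)\cap\Dp_0=\emptyset$, hence each $\beta_i(d)>0$ and $X_{-\beta_i}\in\u^-$, so $v_w:=X_{-\beta_1}\w\cdots\w X_{-\beta_p}\in\w^p\u^-$. Its $\ha$-weight is $-\sum_i\beta_i = w(\rho)-\rho$, a standard identity (this is exactly $w^{-1}$ sending $N(w)$ to negative roots, combined with $\rho(\a_i^\vee)=1$). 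One checks $v_w$ is $\bb_0$-highest: the raising operators from $\Dp_0$ applied to $v_w$ vanish because $N(w)$ is $\Dp_0$-saturated in the appropriate sense (a consequence of $w\in\Wa'$). Finally, to see $v_w\in Ker\,L_p$ — equivalently that it is a Casimir eigenvector with eigenvalue forcing $L_p v_w=0$ — one computes $\Omega_\k$ on a highest-weight vector of $\m$-weight $\l=w(\rho)-\rho$ via the standard formula $\Omega_\k$ acts by $(\l+2\rho_0,\l)=(\l+\rho_0,\l+\rho_0)-(\rho_0,\rho_0)$, while $d$ acts by $\ov\l(d)$-type data; using $\rho(\a_i^\vee)=1$, $\rho(d)=0$ and the $\Wa$-invariance of $(\cdot,\cdot)$ one gets $(d+\Omega_\k)v_w = \big((w(\rho),w(\rho))-(\rho,\rho)\big)v_w = 0$, hence $L_p v_w=0$.

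For step (i), the reverse inclusion, I would argue that every $\m$-module occurring in $Ker\,L_p$ has highest weight of the form $w(\rho)-\rho$ with $w\in\Wa'$ of length $p$, and occurs with multiplicity one. The cleanest route is to invoke a counting/character identity: the Euler–Poincaré principle gives $\sum_p(-1)^p\,\mathrm{ch}\,H_p(\u^-)=\sum_p(-1)^p\,\mathrm{ch}\,\w^p\u^-$, and the right-hand side admits a Weyl–Kac denominator-type factorization whose expansion is exactly $\sum_{w\in\Wa'}(-1)^{\ell(w)}\mathrm{ch}\,V(w(\rho)-\rho)$. Combined with the positivity/self-adjointness of $L_p$ (so homology in each degree is a genuine submodule of $\w^p\u^-$, no cancellation), this pins down $H_p(\u^-)$ degree by degree. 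This step is where I would lean on the cited Garland–Lepowsky machinery (Theorem 3.2.7 of \cite{Kumar}), since a fully self-contained proof of the character identity requires the affine analogue of Kostant's combinatorial lemma on $\Wa'$.

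The main obstacle is precisely step (i): showing that the Laplacian kernel is \emph{not larger} than the sum over $\Wa'$. Step (ii) only produces one highest-weight vector per $w\in\Wa'$; ruling out additional harmonic cohomology classes, and controlling multiplicities, is the substantive point, and it is what the Garland–Lepowsky theorem supplies. In the present exposition this is legitimately imported rather than reproved, so the real content of the argument here is the translation via \eqref{lap}: once the Laplacian is identified with $-\tfrac12(d+\Omega_\k)$, the eigenvalue bookkeeping in step (ii) is mechanical, and the structural statement follows from the cited result.
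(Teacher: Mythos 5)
The first thing to say is that the paper offers no proof of Theorem \ref{GL}: it is imported wholesale as a special case of Theorem 3.2.7 of \cite{Kumar} (an extension of Garland--Lepowsky \cite{Ga}), and this is one of the two external inputs the introduction explicitly allows itself. Your proposal is therefore not in tension with the paper --- it is an outline of the standard proof which, by your own admission, defers the decisive step (that $Ker\,L_p$ contains nothing beyond the modules indexed by $\Wa'$) to the very same reference. The material you do flesh out is essentially correct and goes beyond what the paper writes down: for $w\in\Wa'$ the condition $w^{-1}(\Dp_0)\subset\Dap$ does force $N(w)\cap\Dp_0=\emptyset$, so \eqref{hw} lies in $\w^p\u^-$; its weight is $-\sum_{\a\in N(w)}\a=w(\rho)-\rho$; and the eigenvalue computation $(d+\Omega_\k)v_w=(\l+2\rho,\l)v_w=\bigl((w(\rho),w(\rho))-(\rho,\rho)\bigr)v_w=0$ is exactly the bookkeeping that Proposition \ref{laplacian} makes mechanical (using that $c$ acts by zero on $\w\u^-$ and the $\Wa$-invariance of the form).

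Two soft spots in the parts you sketch. First, the Euler--Poincar\'e route to step (i) is not sufficient as stated: self-adjointness and positivity of $L_p$ do not prevent an irreducible $\m$-module from occurring in both $H_p$ and $H_{p+1}$ and cancelling in the alternating sum of characters. What actually separates the degrees is the Casimir constraint itself: every constituent $V(\mu)$ of $Ker\,L_p$ has $\mu$ a sum of $p$ distinct roots of $-\Dap$ lying in $\u^-$ and satisfies $(\mu+\rho,\mu+\rho)=(\rho,\rho)$, and the affine analogue of Kostant's combinatorial lemma identifies such $\mu$ with $w(\rho)-\rho$ for a unique $w\in\Wa'$ of length $p$; this is the genuinely hard input, and it is precisely what the citation supplies. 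Second, the assertion that the raising operators $e_\a$, $\a\in\Dp_0$, annihilate \eqref{hw} ``by saturation'' should be unwound: one needs that for $\beta\in N(w)$ and $\a\in\Dp_0$ with $\beta-\a\in\Dap$ one has $\beta-\a\in N(w)$ (a consequence of $w^{-1}\a\in\Dap$), after which each term of $e_\a\cdot v_w$ either vanishes or repeats a factor of the wedge. Neither point is a gap relative to the paper, which proves none of this; they are simply where a self-contained write-up would have to do real work.
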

\vskip15pt
We now define
\begin{equation}\label{grado}
\w^{(r,s)} \u^{-}=Span\left\{ x^1_{i_1}\wedge
x^2_{i_2}\wedge\dots\wedge x^r_{i_r}\mid
 - \sum_{i=1}^r i_{j}=s \right\}.
\end{equation}
Note that the map $x^1_{-\frac{1}{2}}\w\ldots\w x^r_{-\frac{1}{2}}\mapsto
x^1\w\ldots\w  x^r$ affords   a canonical
identification
\begin{equation}\label{Ident}
\w^{(r,r/2)} \u^{-} \ {\buildrel \cong \over \longrightarrow}\
\w^{r}\pp
\end{equation}
that intertwines the adjoint action of $\k$.
\begin{rem}\label{spin} Recall that there is a standard linear isomorphism $\tau: so(\pp)\to \w^2\pp$ given by 
$\tau(\varphi)=-\frac{1}{4}\sum_i\varphi(p_i)\w p^i$, where $\{p_i\},\,\{p^i\}$ are dual basis of
$\pp$ with respect to 
$(\cdot,\cdot)_{|\pp}$. The adjoint action $ad_\pp$ of $\k$ on $\pp$ defines an embedding $\theta:\k\to so(\pp)$.
Observe that   $Im\,\tau\circ\theta$  corresponds, under the identification \eqref{Ident}, to $\partial^*_2(\w^{(1,1)}\u^-)$.
Infact, for $x\in\k$, a formal calculation affords

$$\partial^*_2(x_{-1})=-\frac{1}{2}\sum_{t=1}^{\dim\pp} [x,p_t]_{-\half}\w 
p^t_{-\half}.$$
\end{rem}
\medskip

\begin{lemma}\label{lemma 1}
Given linearly independent elements $x^1,\dots,x^p$ of $\pp$, set $v=x^1_{-\frac{1}{2}}\w\ldots\w x^p_{-\frac{1}{2}}$. Then $\dd_p(v)=0$ if and
only if $[x^i,x^j]=0$ for all $i,j$.
\end{lemma}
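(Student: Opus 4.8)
\emph{Plan.} The idea is to compute $\dd_p(v)$ explicitly and then isolate its summands by a linear-independence argument. First I would record that for $x,y\in\pp$ the bracket in $\wL(\g,\s)$ reads $[x_{-\frac{1}{2}},y_{-\frac{1}{2}}]=[x,y]_{-1}$: there is no central contribution, since $-1/2\ne 1/2$, and $[x,y]\in[\pp,\pp]\subseteq\k$ because $\pp=\g^{\ov{1/2}}$, so that $[x,y]_{-1}\in t^{-1}\otimes\k\subseteq\u^-$. Writing $\omega_{ij}$ for the wedge of the $x^k_{-\frac{1}{2}}$ with $k\ne i,j$ (in increasing order of $k$), the Chevalley--Eilenberg formula then gives
\begin{equation*}
\dd_p(v)=\sum_{1\le i<j\le p}(-1)^{i+j}\,[x^i,x^j]_{-1}\w\omega_{ij}.
\end{equation*}
The ``if'' direction is immediate from this.

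\emph{The converse.} Put $A=t^{-1}\otimes\k$ and $B=t^{-\frac{1}{2}}\otimes\pp$, and let $R$ be the sum of the remaining homogeneous subspaces of $\u^-$, so that $\u^-=A\oplus B\oplus R$. Then $\w^{p-1}\u^-$ is the direct sum of the subspaces $\w^a A\otimes\w^b B\otimes\w^c R$ with $a+b+c=p-1$, and each summand of the displayed expression lies in the single term $W:=\w^1 A\otimes\w^{p-2} B$, which is canonically identified with $A\otimes\w^{p-2} B$ via $y_{-1}\w z^1_{-\frac{1}{2}}\w\cdots\w z^{p-2}_{-\frac{1}{2}}\mapsto y_{-1}\otimes(z^1_{-\frac{1}{2}}\w\cdots\w z^{p-2}_{-\frac{1}{2}})$. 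Under this identification $\dd_p(v)=\sum_{i<j}(-1)^{i+j}[x^i,x^j]_{-1}\otimes\omega_{ij}$. I would now extend $x^1,\dots,x^p$ to a basis of $\pp$; the $\omega_{ij}$, as $\{i,j\}$ runs over the two-element subsets of $\{1,\dots,p\}$, are then pairwise distinct elements of the basis of $\w^{p-2} B$ induced by that basis of $\pp$, hence linearly independent. Consequently $\dd_p(v)=0$ forces $[x^i,x^j]_{-1}=0$, i.e.\ $[x^i,x^j]=0$, for every pair.

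\emph{Main point.} There is no genuine obstacle here: the argument is bookkeeping. The only things needing care are the (trivial) absence of a central term in the affine bracket, the fact that $W$ is an honest direct summand of $\w^{p-1}\u^-$ so that the vanishing of $\dd_p(v)$ may be tested inside it, and --- crucially --- the hypothesis that $x^1,\dots,x^p$ be linearly independent, which is exactly what prevents the monomials $\omega_{ij}$ from cancelling one another irrespective of possible coincidences among the brackets $[x^i,x^j]$.
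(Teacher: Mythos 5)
Your proof is correct and takes essentially the same approach as the paper, which simply writes out the same expansion $\dd_p(v)=\sum(-1)^{i+j}[x^i,x^j]_{-1}\w\omega_{ij}$ and asserts that the lemma ``follows readily.'' Your extra bookkeeping --- the absence of a central term, the placement of the summands in the graded component $\w^1(t^{-1}\otimes\k)\w\w^{p-2}(t^{-1/2}\otimes\pp)$, and the linear independence of the $\omega_{ij}$ coming from the hypothesis on $x^1,\dots,x^p$ --- just makes explicit the details the paper leaves to the reader.
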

\begin{proof}
This follows readily from the definition of $\dd_p$:
$$
\dd_p(v)=\sum(-1)^{i+j}[x^i,x^j]_{-1}\w x^1_{-\half} \dots\widehat{x^i_{-\half}}\dots\w 
\widehat{x^j_{-\half}}\dots\w x^p_{-\half}.
$$
\end{proof}
For a $p$-dimensional subspace
$\aa=\bigoplus\limits_{i=1}^p\mathbb C v^i$
 of
$\pp$ define
\begin{align*}
&v_\aa=v^1\w\ldots\w v^p\in\w^p\pp,\\
&\widehat v_\aa=v^1_{-\half}\w\ldots\w v^p_{-\half}\in\w^{(p,p/2)}\u^-.\end{align*}

\begin{theorem}\label{eigen} The maximal eigenvalue for the action of $\Omega_\k$ on $\w^p\pp$ is at most  $p/2$.
Equality holds if and only if there exists a commutative subspace $\aa$ of $\pp$ of dimension $p$.
In such a case, $v_\aa$ is an eigevector for $\Omega_\k$ relative to the eigenvalue $p/2$. \end{theorem}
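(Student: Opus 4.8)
The plan is to carry everything over to the graded piece $\w^{(p,p/2)}\u^-$, which by \eqref{Ident} is $\k$-equivariantly isomorphic to $\w^p\pp$, and to read the estimate off Proposition~\ref{laplacian}. First note that $d$ acts on $\w^{(p,p/2)}\u^-$ as the scalar $-p/2$: a wedge $x^1_{-\half}\w\ldots\w x^p_{-\half}$ has $d$-eigenvalue $\sum_i(-\half)=-p/2$, and by \eqref{grado} such wedges span $\w^{(p,p/2)}\u^-$. Hence, restricting \eqref{lap}, on $\w^{(p,p/2)}\u^-$ we have $\Omega_\k=\frac p2\mathrm{Id}-2L_p$. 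Since $\{\cdot,\cdot\}$ is positive definite on $\w\u^-$, the Laplacian $L_p$ is self-adjoint and positive semidefinite, because $\{L_px,x\}=\{\dd^*_{p+1}x,\dd^*_{p+1}x\}+\{\dd_px,\dd_px\}\ge0$. Therefore $\Omega_\k$ is diagonalizable on $\w^{(p,p/2)}\u^-\cong\w^p\pp$ with all eigenvalues $\le p/2$; this is the first assertion, and the value $p/2$ is attained if and only if $\ker L_p\cap\w^{(p,p/2)}\u^-\ne0$. Here we use that $L_p=-\half(d+\Omega_\k)$ commutes with $d$, hence preserves each $\w^{(p,s)}\u^-$.

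A key elementary remark is that $\w^{(p+1,p/2)}\u^-=0$: a wedge of $p+1$ vectors of $\u^-$ has total $t$-degree at most $(p+1)\cdot(-\half)<-p/2$, so it cannot lie in degree $(p+1,p/2)$. Consequently $\dd^*_{p+1}$ vanishes on $\w^{(p,p/2)}\u^-$, so there $L_p=\dd^*_p\dd_p$ and $\ker L_p\cap\w^{(p,p/2)}\u^-=\ker\dd_p\cap\w^{(p,p/2)}\u^-$; note also that $\w^{(p,p/2)}\u^-$ is exactly the span of the elements $\widehat v_\aa$. Now suppose $\aa\subset\pp$ is commutative of dimension $p$. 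By Lemma~\ref{lemma 1}, $\dd_p(\widehat v_\aa)=0$, hence $L_p(\widehat v_\aa)=0$, hence $\Omega_\k(\widehat v_\aa)=\frac p2\widehat v_\aa$; carrying this through the isomorphism \eqref{Ident}, which intertwines the $\k$-action, gives $\Omega_\k(v_\aa)=\frac p2v_\aa$ with $v_\aa\ne0$. Thus the bound $p/2$ is attained and $v_\aa$ is an eigenvector as claimed.

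Conversely, assume the bound is attained, so $W:=\ker\dd_p\cap\w^{(p,p/2)}\u^-\ne0$. If some $x\in W$ were a boundary, $x=\dd_{p+1}(z)$, then, since $\dd_{p+1}$ respects the bigrading, we could take $z\in\w^{(p+1,p/2)}\u^-=0$, whence $x=0$; so the natural $\m$-equivariant map $W\to H_p(\u^-)$ is injective. By Theorem~\ref{GL} (and regularity of $\rho$, so that the summands occur with multiplicity one), some $V(w(\rho)-\rho)$ with $w\in\Wa'$ and $\ell(w)=p$ appears in the image, and the explicit cycle \eqref{hw}, namely $X_{-\beta_1}\w\ldots\w X_{-\beta_p}$ with $N(w)=\{\beta_1,\ldots,\beta_p\}$, represents its highest weight class; since there are no boundaries in degree $(p,p/2)$, this cycle actually lies in $W\subset\w^{(p,p/2)}\u^-$. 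As $w\in\Wa'$, no $\beta_i$ lies in $\Dp_0$, so $\beta_i(d)>0$ and hence $\beta_i(d)\ge\half$ for every $i$; but $X_{-\beta_1}\w\ldots\w X_{-\beta_p}\in\w^{(p,p/2)}\u^-$ forces $\sum_i\beta_i(d)=p/2$, so $\beta_i(d)=\half$ for all $i$. Therefore $X_{-\beta_i}=x^i_{-\half}$ for suitable $x^i\in\pp$, linearly independent as root vectors for distinct roots, and $\dd_p(x^1_{-\half}\w\ldots\w x^p_{-\half})=0$; by Lemma~\ref{lemma 1}, $[x^i,x^j]=0$ for all $i,j$, so $\aa:=\sp\{x^1,\ldots,x^p\}$ is a commutative subspace of $\pp$ of dimension $p$.

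The main obstacle is exactly this last step: converting the bare cohomological statement ``$\ker\dd_p\ne0$ on $\w^p\pp$'' into an honest commutative subspace. Since Lemma~\ref{lemma 1} only reads off commutativity from a decomposable cycle, one needs a decomposable harmonic representative, and it is the Garland--Lepowsky description of $H_p(\u^-)$ that supplies one; the vanishing $\w^{(p+1,p/2)}\u^-=0$ is the auxiliary input that both identifies cycles in that bidegree with harmonic forms and makes the passage to homology injective.
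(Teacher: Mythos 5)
Your proof is correct and follows essentially the same route as the paper: the upper bound from Proposition~\ref{laplacian} together with positive semidefiniteness of $L_p$ and the fact that $d$ acts by $-p/2$ on $\w^{(p,p/2)}\u^-$; the forward implication from Lemma~\ref{lemma 1} and the vanishing $\w^{(p+1,p/2)}\u^-=0$; and the converse by extracting a decomposable highest-weight cycle from Theorem~\ref{GL} using multiplicity one. Your only (harmless) repackaging is to argue with $\ker\dd_p$ and injectivity into $H_p(\u^-)$ rather than directly with $\ker L_p\cong H_p(\u^-)$, and you usefully spell out the degree count forcing $\beta_i(d)=\tfrac12$, which the paper leaves implicit.
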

\begin{proof} To prove the first statement, remark that $L_p$ is self-adjoint and positive semidefinite on $\w \u^-$ with respect
to $\{\,,\,\}$. Since, by Proposition \ref{laplacian},  $\Omega_\k=-d-2L_p$, the claim follows.\par
Suppose that $\aa$ is an abelian subspace of $\pp$ of dimension $p$. Then, by Lemma \ref{lemma 1}, 
$\partial_p(\widehat v_\aa)=0$. Since $\widehat v_\aa\in\w^{(p,p/2)}\u^-$, we have that 
$\partial^*_{p+1}(\widehat v_\aa)=0$, hence $L_p(\widehat v_\aa)=0$. Therefore,  by \eqref{lap}, we have
$\Omega_\k(v_\aa)=p/2\,v_\aa$. Conversely, if $\Omega_\k$ has eigenvalue $p/2$ on $\w^p\pp$, then
$Ker\,L_p\cap\w^{(p,p/2)}\u^-\ne 0$. Using \eqref{iso} and  Theorem \ref{GL}, 
we know that $Ker\,L_p$ decomposes with multiplicity one. Since $\w^{(p,p/2)}\u^-$ is $\mathfrak m$-stable, 
we deduce that one the highest weight vectors \eqref{hw},
 say $x^1_{-\half}\w\cdots\w x^p_{-\half}$,  must belong to $
Ker\,L_p\cap\w^{(p,p/2)}\u^-$.  Since $\partial^*_p\partial_p=0$ implies that $\partial_p=0$, Lemma \ref{lemma 1} gives that
$Span(x^1,\ldots,x^p)$ is the required  abelian subspace.
\end{proof}
\vskip5pt

We now  relate the vectors $v_\aa$ to distinguished elements of $\Wa$. Set $\D_\pp$ to be the set of $\h_0$-weights
of
$\pp$ and suppose
 that $\i$ is a $\h_0$-stable subspace of $\pp$. Set
\begin{align*}
&\Phi_\i=\{\a\in\D_\pp\mid \pp_\a\subset\i\},\\  
&\widehat\Phi_\i=\{\tfrac{1}{2}\d-\a\mid\a\in\Phi_\i\}.
\end{align*}

\begin{theorem}\label{w}The following statements are equivalent
\item 1)
$\i$ is an abelian $\b_0$-stable subspace of $\pp$.

\item 2)
There is an element $w_\i\in \widehat W$ such that $N(w_\i)=\widehat\Phi_\i$.

\item 3)
$\i$ is a $\b_0$-stable subspace of $\pp$ and $\Omega_\k v_\i=\half(dim\,\i) v_\i$.
\vskip5pt\noindent
\end{theorem}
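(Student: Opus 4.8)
The plan is to dispatch the easy equivalence $1)\Leftrightarrow 3)$ by a direct computation with \eqref{lap}, and then to read both $1)\Rightarrow 2)$ and $2)\Rightarrow 1)$ off the Garland--Lepowsky decomposition in Theorem \ref{GL}. For $1)\Leftrightarrow 3)$, set $p=\dim\i$ and, via the identification \eqref{Ident}, view $v_\i$ as $\widehat v_\i\in\w^{(p,p/2)}\u^-$. Since $d$ acts on $\w^{(p,p/2)}\u^-$ by the scalar $-p/2$, formula \eqref{lap} reads $L_p\widehat v_\i=-\tfrac12(\Omega_\k-\tfrac p2)\widehat v_\i$; on the other hand $\w^{(p+1,p/2)}\u^-=0$ forces $\partial^*_{p+1}\widehat v_\i=0$ (as in the proof of Theorem \ref{eigen}), so that $\{L_p\widehat v_\i,\widehat v_\i\}=\{\partial_p\widehat v_\i,\partial_p\widehat v_\i\}$ and hence, by positive-definiteness of $\{\cdot,\cdot\}$ on $\u^-$, $L_p\widehat v_\i=0\iff\partial_p\widehat v_\i=0$. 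Thus $\Omega_\k v_\i=\tfrac p2 v_\i$ holds if and only if $\partial_p\widehat v_\i=0$, i.e.\ (Lemma \ref{lemma 1}) if and only if $\i$ is commutative; adding to both sides the requirement that $\i$ be $\b_0$-stable gives $1)\Leftrightarrow 3)$.

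Next, $1)\Rightarrow 2)$. Assume $\i$ is abelian and $\b_0$-stable, and let $\n_0$ be the nilradical of $\b_0$. By the previous paragraph $L_p\widehat v_\i=0$, so $\widehat v_\i\in Ker\,L_p\cong H_p(\u^-)$ by \eqref{iso}. Since $\b_0$-stability gives $\n_0\i\subseteq\i$, we get $\n_0\widehat v_\i\subseteq\w^p\i=\C\widehat v_\i$, and as $\n_0\widehat v_\i$ is a sum of $\ha$-weight vectors of weights different from that of $\widehat v_\i$, this forces $\n_0\widehat v_\i=0$: so $\widehat v_\i$ is a highest weight vector for $\m$ inside $Ker\,L_p$. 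By Theorem \ref{GL} together with the multiplicity-one property of $Ker\,L_p$ (the weights $w(\rho)-\rho$ being pairwise distinct since $\rho(\a_i^\vee)=1$), $\widehat v_\i$ is a scalar multiple of the highest weight vector \eqref{hw} of a unique summand $V(w(\rho)-\rho)$ with $\ell(w)=p$ and $N(w)=\{\be_1,\dots,\be_p\}$. Writing $\i=\bigoplus_{j=1}^p\pp_{\mu_j}$ (nonzero $\h_0$-weight spaces of $\pp$ being one-dimensional, because $-\tfrac12\d+\mu$ is a real root of $\wL(\g,\s)$ for $\mu\ne0$) we have $\widehat v_\i=v^1_{-\half}\w\dots\w v^p_{-\half}$ with $v^j\in\pp_{\mu_j}$; matching the two decomposable wedges identifies the root sets $\{-\be_j\}$ and $\{-\tfrac12\d+\mu_j\}$, i.e.\ $N(w)=\{\tfrac12\d-\mu\mid\mu\in\Phi_\i\}=\widehat\Phi_\i$. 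Set $w_\i=w$.

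For $2)\Rightarrow 1)$, suppose $N(w_\i)=\widehat\Phi_\i$. Since $\widehat\Phi_\i$ consists of roots of $d$-degree $\tfrac12$ it is disjoint from $\Dp_0$, so $w_\i^{-1}(\Dp_0)\subseteq\Dap$, i.e.\ $w_\i\in\Wa'$; put $p=\ell(w_\i)=|\widehat\Phi_\i|=|\Phi_\i|$. By Theorem \ref{GL}, $V(w_\i(\rho)-\rho)$ is a summand of $H_p(\u^-)\cong Ker\,L_p$, and its highest weight vector \eqref{hw}, say $X_{-\be_1}\w\dots\w X_{-\be_p}$ with $\be_j=\tfrac12\d-\a_j$ and $\{\be_j\}=\widehat\Phi_\i$, has each factor $X_{-\be_j}$ in $\wL(\g,\s)_{-\frac12\d+\a_j}=t^{-\half}\t\pp_{\a_j}$; hence it equals $x^1_{-\half}\w\dots\w x^p_{-\half}$ with $x^j\in\pp_{\a_j}$ and lies in $\w^{(p,p/2)}\u^-$. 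It is nonzero and $L_p$-harmonic, so $\partial_p$-closed, whence by Lemma \ref{lemma 1} the subspace $\j=Span(x^1,\dots,x^p)$ (of dimension $p$) is commutative. Being a highest weight vector, $x^1_{-\half}\w\dots\w x^p_{-\half}$ is $\n_0$-invariant, so by the elementary lemma below $\j$ is $\b_0$-stable; and since $\pp_{\a_j}=\C x^j\subseteq\j$ we have $\Phi_\i=\{\a_j\}\subseteq\Phi_\j$, which must be an equality as $\dim\j=p=|\Phi_\i|$, whence $\j=\bigoplus_{\a\in\Phi_\i}\pp_\a=\i$ and $1)$ follows.

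The elementary lemma used twice above is: \emph{if $x^1\w\dots\w x^p$ is a nonzero decomposable wedge of $\h_0$-weight vectors of $\pp$ annihilated by $\n_0$, then $V=Span(x^1,\dots,x^p)$ is $\b_0$-stable}. Indeed $V$ is $\h_0$-stable, and for $\gamma\in\Dp_0$ one writes $e_\gamma x^j=u_j+y_j$ with $u_j\in V$ and $y_j$ in a fixed $\h_0$-stable complement $V'$ of $V$ in $\pp$; along the bigrading $\w^p\pp=\bigoplus_{k=0}^p(\w^kV)\w(\w^{p-k}V')$, the relation $0=e_\gamma(x^1\w\dots\w x^p)$ has vanishing component in $(\w^{p-1}V)\w V'$, which reads $\sum_j(-1)^{p-j}(x^1\w\dots\widehat{x^j}\dots\w x^p)\w y_j=0$; as the $x^1\w\dots\widehat{x^j}\dots\w x^p$ form a basis of $\w^{p-1}V$ this forces every $y_j=0$, so $e_\gamma V\subseteq V$. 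The one genuinely delicate spot in the whole argument is the bookkeeping at the ends of the two implications: matching $N(w)$ with $\widehat\Phi_\i$ and identifying $\j$ with $\i$ rests on the non-zero $\h_0$-weight spaces of $\pp$ being one-dimensional and on $\i$ being a sum of full weight spaces, and if $0\in\D_\pp$ the imaginary root $\tfrac12\d$ needs separate (minor) attention, by treating the part of $\i$ inside the zero weight space $\pp_0$ directly. An alternative, cohomology-free route to $1)\Leftrightarrow 2)$ would use the Peterson-type fact that $N(w)=\widehat\Phi_\i$ is solvable for $w\in\Wa$ exactly when $\widehat\Phi_\i$ and $\Dap\setminus\widehat\Phi_\i$ are both closed under addition of roots, these two closure conditions being equivalent respectively to commutativity and to $\b_0$-stability of $\i$; but the route above is the one in keeping with the rest of the paper.
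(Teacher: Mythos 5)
Your proof is correct and follows essentially the same route as the paper: formula \eqref{lap}, Lemma \ref{lemma 1}, and the Garland--Lepowsky decomposition of $Ker\,L_p$ via Theorem \ref{GL}. The only organizational difference is that you establish $1)\Leftrightarrow 3)$ directly from positive semidefiniteness of the Laplacian (the paper instead cycles $1)\Rightarrow 2)\Rightarrow 3)\Rightarrow 1)$ and invokes Theorem \ref{eigen} for the last step), and you make explicit several details --- the multiplicity-one matching of highest weight vectors, the $\b_0$-stability lemma, the one-dimensionality of nonzero weight spaces --- that the paper leaves implicit.
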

\begin{proof}
$1)\implies 2)$. Set $p=\dim\i$. Then, since $\i$ is abelian, $\dd_p(\widehat
v_\i)=0$.
Notice that $\widehat v_\i\in\w^{(p,p/2)}\u^-$, so $\dd_p^*(\widehat
v_\i)=0$. It follows that $L_p(\widehat v_\i)=0$. Since $\i$ is $\b_0$-stable,
$\widehat v_\i$ is a maximal vector for $\m$ in $\w\u^{-}$. By
Theorem \ref{GL}, there is an element  $w_{\i}\in\widehat W$ such that
$$
\w_{\a\in N(w_{\i})}X_{-\a}=\widehat v_{\i}
$$
and this implies that $N(w_{\i})=\widehat\Phi_{\i}$.
\par
$2)\implies 3)$. By Theorem \ref{GL} we have that  $\widehat v_{\i}$ is a maximal vector for the
action of
$\m$ on $\w\u^{-}$, hence $\i$ is a $\b_0$-stable subspace of $\pp$. Moreover $L_{p}(\widehat
v_{\i})=0$ therefore
$$
\Omega_{\k}(\widehat v_{\i})=-(2L_{p}+d)(\widehat v_{\i})=\half(dim\,\i)\widehat v_{\i},
$$
and this implies that $\Omega_{\k}v_{\i}=\half(dim\,\i) v_{\i}$.
\par
$3)\implies 1)$. This follows from Theorem \ref{eigen}. \end{proof}
\vskip15pt
 Let 
$\widehat A_p$ denote the linear span of the vectors $\widehat v_\aa$ when $\aa$ ranges over the
set of commutative subalgebras of $\pp$ of dimension $p$. Let also $\widehat M_p$ denote
the eigenspace corresponding to the eigenvalue $p/2$ for the action of $\Omega_\k$ on
$\w^{(p,p/2)}\u^-$.\par
 Denote by $\aa_1,\ldots,\aa_r$ the abelian
$\b_0$-stable subspaces of $\pp$ of dimension $p$ and set 
$\mu_i=\sum_{\a\in\widehat\Phi_{\aa_i}}\a=-\half\dim(\aa_i)\d+\sum_{\a\in\p_{\aa_i}}\a$. \par
Set $\widehat J$ to be the ideal (for exterior multiplication) in
$\w\u^-$ generated by $\dd_2^*(\u^-)$ and set $\widehat J_p=\widehat J\cap\w^{(p,p/2)}\u^-$.
\par 

\begin{prop}\label{aff}
\item 1)  $\widehat A_p=\widehat M_p=\bigoplus\limits_{i=1}^r V(\mu_i)=Ker(L_p)$.

\item 2) 
$$\w^{(p,p/2)}\u^-=\widehat A_p\oplus \widehat J_p$$
is the orthogonal decomposition with respect to  $\{\cdot ,\cdot \}$. In particular, letting 
$\mathcal A$ be the subalgebra of $\bigoplus\limits_{p\geq 0}\w^{(p,p/2)}\u^-$ generated by $1$ and $\dd_2^*(\u^-)$ then 
$$\bigoplus_{p\geq 0}\w^{(p,p/2)}\u^-=\mathcal A\w\sum_{p\geq 0} \widehat A_p$$
\end{prop}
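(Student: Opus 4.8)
\emph{Strategy.} Everything is read off from the Hodge theory of $\w\u^-$; the point to keep in mind is that $\dd_p$, $\dd^*_p$ and $\{\cdot,\cdot\}$ are homogeneous for the $d$-grading of $\w\u^-$, so that $\w^p\u^-=\bigoplus_s\w^{(p,s)}\u^-$ is an $L_p$-stable orthogonal decomposition and $Ker\,L_p=\bigoplus_s\big(Ker\,L_p\cap\w^{(p,s)}\u^-\big)$ as $\m$-modules. Also $\w^{(q,s)}\u^-=0$ unless $s\ge q/2$, and the level $-1$ (resp. $-1/2$) part of $\u^-$ is $t^{-1}\otimes\k$ (resp. $t^{-1/2}\otimes\pp$).

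\emph{Proof of 1).} On $\w^{(p,p/2)}\u^-$ the element $d$ acts as the scalar $-p/2$, so by \eqref{lap} there $\Omega_\k=-d-2L_p=\tfrac p2-2L_p$; hence $\widehat M_p=Ker\,L_p\cap\w^{(p,p/2)}\u^-$. By \eqref{iso} and Theorem~\ref{GL}, $Ker\,L_p$ is a multiplicity-free $\m$-module whose isotypic components are generated by the vectors \eqref{hw}; since $\w^{(p,p/2)}\u^-$ is $\m$-stable, $Ker\,L_p\cap\w^{(p,p/2)}\u^-$ is the sum of those $V(w(\rho)-\rho)$ for which $\w_{\beta\in N(w)}X_{-\beta}\in\w^{(p,p/2)}\u^-$, i.e. for which $\beta(d)=\tfrac12$ for every $\beta\in N(w)$. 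For such a $w$ that generator equals $x^1_{-1/2}\wedge\cdots\wedge x^p_{-1/2}$ with $x^1,\dots,x^p\in\pp$ linearly independent; being harmonic, and with $\dd^*_{p+1}$ vanishing on $\w^{(p,p/2)}\u^-$ for degree reasons, it is $\dd_p$-closed, so by Lemma~\ref{lemma 1} the $x^i$ span an abelian subspace, which is $\b_0$-stable because a maximal vector for $\m$ spans a $\b_0$-stable subspace (as in the proof of Theorem~\ref{w}); thus the span of $x^1,\dots,x^p$ is one of $\aa_1,\dots,\aa_r$. Conversely each $\aa_i$ gives such a $w=w_{\aa_i}$ by Theorem~\ref{w}, with $\w_{\beta\in N(w_{\aa_i})}X_{-\beta}=\widehat v_{\aa_i}$ and highest weight $\mu_i$. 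Hence $Ker\,L_p\cap\w^{(p,p/2)}\u^-=\bigoplus_{i=1}^r V(\mu_i)$. Finally $\widehat A_p\subseteq\widehat M_p$ exactly as in the proof of Theorem~\ref{eigen}, and $\widehat A_p$ is $\m$-stable: $c,d$ act by scalars on $\w^{(p,p/2)}\u^-$, while $\widehat A_p$ is $\k$-stable because $\exp(t\,\ad k)\,\widehat v_\aa=\widehat v_{\exp(t\,\ad k)\aa}$ and $\exp(t\,\ad k)\aa$ is again an abelian subspace of dimension $p$. Containing each maximal vector $\widehat v_{\aa_i}$, it contains each $V(\mu_i)$, so $\widehat A_p=\widehat M_p=\bigoplus_{i=1}^r V(\mu_i)=Ker\,L_p\cap\w^{(p,p/2)}\u^-$.

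\emph{Proof of 2), the orthogonal decomposition.} Since $\{\cdot,\cdot\}$ is positive definite and $d$-homogeneous, $\w^{(p,p/2)}\u^-=\widehat A_p\oplus C_p$ orthogonally with $C_p=(Ker\,L_p)^\perp\cap\w^{(p,p/2)}\u^-$, and by \eqref{im} this equals $(Im\,\dd^*_p+Im\,\dd_{p+1})\cap\w^{(p,p/2)}\u^-$. As $\w^{(p+1,p/2)}\u^-=0$, $d$-homogeneity forces $Im\,\dd_{p+1}\cap\w^{(p,p/2)}\u^-=0$ and $Im\,\dd^*_p\cap\w^{(p,p/2)}\u^-=\dd^*_p\big(\w^{(p-1,p/2)}\u^-\big)$; hence $C_p=\dd^*_p\big(\w^{(p-1,p/2)}\u^-\big)$, and it remains to identify this with $\widehat J_p$. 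A further degree count shows that $\w^{(p-1,p/2)}\u^-=(t^{-1}\otimes\k)\wedge\w^{p-2}(t^{-1/2}\otimes\pp)$ (exactly one factor at level $-1$), that $\dd^*_2(\u^-)$ meets $\bigoplus_q\w^{(q,q/2)}\u^-$ only in $\dd^*_2(t^{-1}\otimes\k)\subseteq\w^{(2,1)}\u^-$ (indeed $\dd^*_2$ kills $t^{-1/2}\otimes\pp$, and $\dd^*_2(t^{-1}\otimes\k)$ is given by the formula of Remark~\ref{spin}), and hence that $\widehat J_q=\w^{(q-2,(q-2)/2)}\u^-\wedge\dd^*_2(t^{-1}\otimes\k)$ for every $q$. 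Finally, testing $\{\dd^*_p\omega,\eta\}=\{\omega,\dd_p\eta\}$ against $\eta\in\w^{(p,p/2)}\u^-$ and using that $\dd_p\eta$ contains a single level-$(-1)$ factor (the bracket it produces) one obtains, for $y\in\k$, $x^i\in\pp$,
\[
\dd^*_p\big(y_{-1}\wedge x^1_{-1/2}\wedge\cdots\wedge x^{p-2}_{-1/2}\big)=\pm\,\dd^*_2(y_{-1})\wedge x^1_{-1/2}\wedge\cdots\wedge x^{p-2}_{-1/2},
\]
the constant being fixed exactly as in Remark~\ref{spin}. Together with the displays above this yields $\dd^*_p\big(\w^{(p-1,p/2)}\u^-\big)=\widehat J_p$, i.e. $\w^{(p,p/2)}\u^-=\widehat A_p\oplus\widehat J_p$ orthogonally.

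\emph{Proof of 2), the ``in particular''.} Put $R=\bigoplus_{p\ge0}\w^{(p,p/2)}\u^-$ and $S=\dd^*_2(t^{-1}\otimes\k)\subseteq\w^{(2,1)}\u^-$. By the degree analysis above, inside $R$ the ideal $\widehat J\cap R=\bigoplus_p\widehat J_p$ is the ideal $R\wedge S$ generated by $S$, while the subalgebra of $R$ generated by $1$ and $\dd^*_2(\u^-)$ is $\mathcal A=\bigoplus_{k\ge0}S^{\wedge k}$ (a wedge product of elements of $\dd^*_2(\u^-)$ lies in $R$ only when all its factors lie in $S$). From the orthogonal decomposition, $R=\sum_{p\ge0}\widehat A_p+R\wedge S$; substituting this expression for $R$ inside $R\wedge S$ and iterating — which terminates since $S^{\wedge k}=0$ for $k$ large — gives $R\wedge S=\big(\sum_{p\ge0}\widehat A_p\big)\wedge\bigoplus_{k\ge1}S^{\wedge k}$, whence $R=\big(\sum_{p\ge0}\widehat A_p\big)\wedge\mathcal A=\mathcal A\wedge\sum_{p\ge0}\widehat A_p$. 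The only step that is not pure bidegree bookkeeping is the displayed formula for $\dd^*_p$: its shape is forced by the $d$-grading, but pinning down the constant and sign requires unwinding $\s_o$ together with the invariance of $(\cdot,\cdot)$, in the spirit of Remark~\ref{spin}.
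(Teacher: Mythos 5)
Your proof is correct and follows essentially the same route as the paper's: part 1) via the Laplacian formula \eqref{lap} combined with Garland--Lepowsky and Theorem \ref{w}, and part 2) via the Hodge decomposition \eqref{im} together with the bidegree bookkeeping that identifies $(Ker\,L_p)^\perp$ inside $\w^{(p,p/2)}\u^-$ with $\widehat J_p$, followed by the same iteration for the last claim. The only cosmetic difference is that where you obtain $\dd_p^*\bigl(y_{-1}\w x^1_{-1/2}\w\cdots\bigr)=\pm\,\dd_2^*(y_{-1})\w x^1_{-1/2}\w\cdots$ by an adjointness computation, the paper simply invokes that $\dd^*$ is a skew-derivation of $\w\u^-$.
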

\begin{proof}
1).  By Theorem \ref{eigen}, the linear generators of $\widehat A_p$ are eigenvectors for $\Omega_\k$
of eigenvalue $p/2$, hence $\widehat A_p,\subseteq \widehat M_p$. Clearly, by \eqref{lap}, $\widehat M_p\subseteq Ker\,L_p$.
For any element $w\in\Wa$, the following relation holds (see e.g. \cite[Corollary 1.3.22]{Kumar}):
$$w(\rho)-\rho=-\sum_{\a\in N(w)} \a.$$   
Combining this observation with  Theorem \ref{GL} and Theorem \ref{w}, we have that
$Ker\,L_p=\bigoplus\limits_{i=1}^r V(\mu_i)$. Finally, by Theorem \ref{w}, $V(\mu_i)$ is linearly
generated by elements in $\widehat A_p$, hence 
$\bigoplus\limits_{i=1}^r V(\mu_i)\subseteq \widehat A_p$.
\item 2). We have
$$\widehat A_p^\perp=(KerL_p)^\perp=\dd_p^*(\w^{(p-1,p/2)}\u^-).$$
The first equality is clear from part 1), whereas the second follows  combining \eqref{im} with the fact that
$\w^{(p+1,p/2)}\u^-=0$. It remains to prove that $\dd_p^*(\w^{(p-1,p/2)}\u^-)=\widehat J_p$.
Observe that, if $v\in\w^{(p-1,p/2)}\u^-$, then necessarily $v$ is a sum of decomposable elements of  type
$x^1_{-1}\w x^2_{-\half}\w\cdots\w x^{p-1}_{-\half}$.  Assume that $v=x^1_{-1}\w x^2_{-\half}\w\cdots\w x^{p-1}_{-\half}$. Since
$\partial^*$ is a skew-derivation and
$\partial^*_{p-1}(x^2_{-\half}\w\cdots\w x^{p-1}_{-\half})\in\w^{(p-1,\frac{p-2}{2})}\u^-=0$, we have
$$\dd_p^*(v)=\dd_2^*(x^1_{-1})\w x^2_{-\half}\w\cdots\w x^{p-1}_{-\half},$$
so that $\dd_p^*(v)\in \widehat J_p$. Conversely, if $w\in \widehat J_p$, then $w=\partial_2^*(x)\w y$ with
$x\in\w^{(1,s)}\u^-,\, y\in\w^{(p-2,r)}\u^-$. Since $s+r=p/2,\,r\geq \frac{p-2}{2},\,s\geq 1$, we have necessarily $s=1,r=
\frac{p-2}{2}$. Therefore $\partial^*_{p-1}(y)=0$, hence $w=\partial^*_p(x\w
y)\in\dd_p^*(\w^{(p-1,p/2)}\u^-)$.\par Finally, if $x\in\bigoplus_{p\geq 0}\w^{(p,p/2)}\u^-$, then
$x=a_1+\partial_2^*(j_1)\w b_1$  with $a_1\in \widehat A_p,\,j_1\in
\u^-,\, b_1\in\w^{(p-2,\frac{p-2}{2})}\u^-$. In turn, we can write $b_1=a_2+\partial_2^*(j_2)\w b_2$ as above, and so on. The last
claim now follows.
\end{proof}

Using the map \eqref{Ident}, the previous Proposition can be restated  as a result on the algebra
$\w\pp$. We set  
$A_p$ to be the linear span of the vectors $v_\aa$ when $\aa$ ranges over the
set of commutative subalgebras of $\pp$ of dimension $p$,  $M_p$ to denote
the eigenspace corresponding to the eigenvalue $p/2$ for the action of $\Omega_\k$ on
$\w^p\pp$.
 Denote by $L(\xi)$ the irreducible $\k$-module of highest
weight $\xi$.  \par
Set $J$ to be the ideal (for exterior multiplication) in
$\w\pp$ generated by $(\tau\circ\theta)(\k)$ and set $J_p=J\cap\w^{p}\pp$.
\par

\begin{theorem}\label{finito}
\item 1)  $A_p=M_p=\bigoplus\limits_{i=1}^r L(\sum\limits_{\a\in\p_{\aa_i}}\a)$.

\item 2) 
\begin{equation}\label{orth}\w^{p}\pp=A_p\oplus J_p\end{equation}
is the orthogonal decomposition with respect to the form on $\w\pp$ defined by extending, by determinants, the Killing form of
$\g$. Moreover, letting 
$A$ be the subalgebra of $\w\pp$ generated by $1$ and $(\tau\circ\theta)(\k)$ then 
$$\w\pp= A\w\sum_{p\geq 0} A_p.$$
\end{theorem}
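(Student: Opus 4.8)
The plan is to deduce Theorem \ref{finito} directly from Proposition \ref{aff} by transporting everything through the canonical isomorphism \eqref{Ident}. First I would observe that the map $x^1_{-\frac12}\w\cdots\w x^r_{-\frac12}\mapsto x^1\w\cdots\w x^r$ carries $\w^{(p,p/2)}\u^-$ onto $\w^p\pp$, intertwines the $\k$-action, sends $\widehat v_\aa$ to $v_\aa$, and (comparing the defining determinant formulas) is an isometry up to the overall scalar relating the Hermitian form $\{\cdot,\cdot\}$ on $\u^-$ to the Killing form on $\pp$; hence orthogonality is preserved. Under this identification $\widehat A_p\mapsto A_p$ and $\widehat M_p\mapsto M_p$, so part 1) of the present theorem is immediate from part 1) of Proposition \ref{aff}, using that $\mu_i=-\half\dim(\aa_i)\d+\sum_{\a\in\p_{\aa_i}}\a$ restricts on $\h_0$ to $\sum_{\a\in\p_{\aa_i}}\a$ and that $V(\mu_i)$, as an $\m$-module, restricts to the irreducible $\k$-module $L(\sum_{\a\in\p_{\aa_i}}\a)$ (the $\d$-component only records the grading $(p,p/2)$ and contributes nothing to the $\k$-structure).

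Next I would identify the images of the two remaining players. By Remark \ref{spin}, $\partial^*_2(\w^{(1,1)}\u^-)=\partial^*_2(\{x_{-1}\mid x\in\k\})$ corresponds under \eqref{Ident} to $(\tau\circ\theta)(\k)\subset\w^2\pp$; since \eqref{Ident} is an algebra map on the bigraded pieces in the sense that $\w^{(r,r/2)}\u^-\w\w^{(s,s/2)}\u^-\to\w^{r+s}\pp$ is the wedge product, the ideal $\widehat J$ intersected with $\bigoplus_p\w^{(p,p/2)}\u^-$ maps onto the ideal $J$ generated by $(\tau\circ\theta)(\k)$, i.e. $\widehat J_p\mapsto J_p$. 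Likewise the subalgebra $\mathcal A$ maps onto $A$. Therefore the orthogonal decomposition $\w^{(p,p/2)}\u^-=\widehat A_p\oplus\widehat J_p$ of Proposition \ref{aff}(2) transports to \eqref{orth}, and the factorization $\bigoplus_p\w^{(p,p/2)}\u^-=\mathcal A\w\sum_p\widehat A_p$ transports to $\w\pp=A\w\sum_{p\geq 0}A_p$.

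The only genuine point requiring care — and the step I expect to be the main obstacle — is making the bigraded identification \eqref{Ident} respect \emph{all} three structures simultaneously ( $\k$-action, form, and exterior multiplication) with the right constants, in particular checking that $\partial^*_2$ on $\u^-$ really lands, via \eqref{Ident}, on $\tau\circ\theta$ and not on some rescaling of it; this is exactly the formal computation recorded in Remark \ref{spin}, namely $\partial^*_2(x_{-1})=-\frac12\sum_t[x,p_t]_{-\half}\w p^t_{-\half}$ versus $(\tau\circ\theta)(x)=-\frac14\sum_t[x,p_t]\w p^t$, so one must observe that the discrepancy is absorbed by the scalar relating $\{\cdot,\cdot\}$ and the Killing form on $\pp$ (equivalently, by how $\partial_2^*$ is normalized by that Hermitian form), after which generating an ideal by either set gives the same $J_p$. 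Once this bookkeeping is settled, the rest is a mechanical translation.
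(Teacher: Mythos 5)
Your overall strategy (transport Proposition \ref{aff} through the identification \eqref{Ident}) is exactly the paper's, and most of the translation is indeed mechanical, as the paper itself notes. But there is one genuine gap, and it is not the one you flagged. The scalar discrepancy between $\partial_2^*(x_{-1})$ and $(\tau\circ\theta)(x)$ is harmless: rescaling a set of generators does not change the ideal it generates, so $\widehat J_p\mapsto J_p$ is fine. The real problem is your claim that \eqref{Ident} ``is an isometry up to an overall scalar'' between the Hermitian form $\{\cdot,\cdot\}$ on $\w^{(p,p/2)}\u^-$ and the bilinear Killing-form extension on $\w^p\pp$. This is false. The form appearing in Theorem \ref{finito} is the \emph{bilinear} extension of $(\cdot,\cdot)_{|\pp}$, whereas Proposition \ref{aff}(2) gives orthogonality for the \emph{Hermitian} form $\{x,y\}=(x,\sigma_o(y))$. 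These are not proportional: $\sigma_o$ is a conjugate-linear antiautomorphism built from the real form and the Chevalley generators, and it maps $t^{-1/2}\otimes\pp$ to $t^{1/2}\otimes\pp$ by $x^i_{-1/2}\mapsto \tilde x^i_{1/2}$ with $\tilde x^i$ depending conjugate-linearly on $x^i$ and in general not a scalar multiple of $x^i$. (Note also that the bilinear form $(\cdot,\cdot)$ restricted to $\w^{(p,p/2)}\u^-$ is identically zero, since the $t$-degrees never sum to $0$; the honest comparison is between $(\hat x, \hat y')$ with $\hat y'\in\w\u$ and $\{\hat x,\sigma_o(\hat y')\}$.) So Hermitian-orthogonality of $\widehat A_p$ and $\widehat J_p$ does not formally yield bilinear-orthogonality of $A_p$ and $J_p$.

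The paper closes this gap with a short but essential extra argument, which your proposal is missing: for $x\in J_p$ and $v_\aa=x^1\w\cdots\w x^p$ with $\aa$ abelian, write
$$(x,v_\aa)=(\hat x, x^1_{1/2}\w\cdots\w x^p_{1/2})=\{\hat x,\sigma_o(x^1_{1/2}\w\cdots\w x^p_{1/2})\}=\{\hat x,\tilde x^1_{-1/2}\w\cdots\w\tilde x^p_{-1/2}\},$$
where $\tilde x^i_{-1/2}=\sigma_o(x^i_{1/2})$, and then check that $[\tilde x^i,\tilde x^j]=0$ because $\sigma_o$ is an antiautomorphism and $[x^i,x^j]=0$. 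Thus $\tilde x^1_{-1/2}\w\cdots\w\tilde x^p_{-1/2}=\widehat v_{\tilde\aa}$ for the abelian subspace $\tilde\aa=\text{\it Span\,}(\tilde x^1,\dots,\tilde x^p)$, so it lies in $\widehat A_p$, which is Hermitian-orthogonal to $\widehat J_p$ by Proposition \ref{aff}(2); hence $(x,v_\aa)=0$. The point is that $\sigma_o$ does not fix $v_\aa$, but it carries the wedge of one abelian subspace to the wedge of another, and $A_p$ as a whole is what gets used. Without this step your deduction of the orthogonality in \eqref{orth} does not go through; the rest of your translation (part 1, the identification of $J_p$ and of $A$, and the factorization $\w\pp=A\w\sum_p A_p$) is correct and matches the paper.
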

\begin{proof} The only statement which does not follows directly from \eqref{Ident} and  Remark \ref{spin} is that 
the decomposition \eqref{orth} is orthogonal with respect to the form induced by the Killing form. Fix $x\in J_p$ and 
$v_\aa=x^1\w\cdots\w x^p$ with $[x^i,x^j]=0$. We want to show that $(x,v_\aa)=0$. Let $\hat x$ be the element
of $\widehat J_p$ corresponding to $x$ under \eqref{Ident}. Then
\begin{equation}\label{eq}(x,x^1\w\cdots\w x^p)=(\hat x,x^1_\half\w\cdots\w x^p_\half)=\{\hat x,\s_o(x^1_\half\w\cdots\w
x^p_\half)\}.\end{equation} Set $\tilde x^i_{-\half}=\s_o(x^i_\half)$. We now observe that  $[\tilde x^i,\tilde x^j]=0$. Indeed
\begin{align*}&[\tilde x^i,\tilde x^j]_{-1}=\\ &[\tilde x^i_{-\half},\tilde x^j_{-\half}]=[\s_o(x^i_\half),\s_o(x^j_\half)]=
-\s_o([x^i_{\half},x^j_{\half}])=-\s_o([x^i,x^j]_1),\end{align*}
and the last term is zero since $\aa$ is abelian. Since \eqref{eq} gets rewritten as
$$(x,v_\aa)=\{\hat x,\tilde x^1_{-\half}\w\cdots\w
\tilde x^p_{-\half}\},$$
and $\hat x\in \widehat J_p$, Proposition \ref{aff} implies that $(x,v_\aa)=0$.
\end{proof}
\section{Remarks}
{\bf 1.} If $\mathfrak s$ is a simple Lie algebra, consider the semisimple algebra $\g=\mathfrak s\oplus \mathfrak s$,
endowed with the switch automorphism $\s(x,y)=(y,x)$. Then we have 
$\k\cong\pp\cong\mathfrak s$ and we recover  Kostant's classical
results on abelian ideals of Borel subalgebras.  Theorems \ref{eigen} appears in \cite{K1} as
Theorem 5.  The statements in Theorem  \ref{finito} appear
 in \cite[Theorem 8]{K1} and \cite[Theorems A, B]{K2}. In all cases  proofs
are different from the ones we have presented.
\par
Subsequently Kostant realized  the connection of abelian ideals with Lie algebra homology (see \cite{Kinv}):
our tratment is inspired by this approach.
\vskip5pt
\noindent{\bf 2.} In the graded case Theorem  \ref{eigen} appears in \cite[Theorem 0.3]{P}, whereas Theorem \ref{finito}
appears in \cite[Theorems 1.1, 1.2]{Han}. Both authors do not exploit the connection with Lie algebra homology.
\vskip5pt
\noindent{\bf 3.} The so-called Peterson's  abelian ideals theorem states that the number of abelian ideals of a Borel
subalgebra of $\g$ in $2^{\text{rank}\,\g}$. This result shed a new light on the  results from \cite{K1}, as Kostant pointed out
in \cite{Kimrn}.   This paper contains an outline of proof of Peterson's result and a proof of 
equivalence $1)\Leftrightarrow 2)$
of Theorem \ref{w} for abelian ideals (see \cite[Section 2]{Kimrn}).\par
 A proof of Peterson's theorem using the geometry of alcoves is given in \cite{CP}. Combining this geometric approach with 
Garland-Lepowsky theorem, a uniform enumeration of abelian $\b_0$--stable subspaces in $\pp$ has
 been obtained in \cite{cmp}. The proof of Theorem \ref{w} is taken from \cite[Theorem 3.2]{cmp}.
\vskip5pt\noindent

\providecommand{\bysame}{\leavevmode\hbox to3em{\hrulefill}\thinspace}
\providecommand{\MR}{\relax\ifhmode\unskip\space\fi MR }
\providecommand{\MRhref}[2]{%
  \href{http://www.ams.org/mathscinet-getitem?mr=#1}{#2}
}
\providecommand{\href}[2]{#2}

\vskip10pt
\footnotesize{

\noindent{\bf Pierluigi M\"oseneder Frajria}: Politecnico di Milano, Polo regionale di Como, 
Via Valleggio 11, 22100 Como,
ITALY;\\ {\tt frajria@mate.polimi.it}
\vskip5pt
\noindent{\bf Paolo Papi}: Dipartimento di Matematica, Universit\`a di Roma 
``La Sapienza", P.le A. Moro 2,
00185, Roma , ITALY;\\ {\tt papi@mat.uniroma1.it} }

\end{document}